\documentclass{article}[14pt,a4paper]

\usepackage[english]{babel}
\usepackage{amsthm}
\usepackage{amssymb}
\usepackage{amsfonts}
\usepackage{amsmath}
\usepackage{fancybox}
\usepackage[all,cmtip]{xy}
\usepackage{titletoc}
\usepackage[style=numeric,backend=bibtex]{biblatex}
\usepackage{bm}
\usepackage{verbatim}
\usepackage{csquotes}

\usepackage{mathrsfs}
% Diese Befehle legen die Breite der Seitenränder fest. Ich würde sie allgemein weglassen, hab sie nur zur Illustration dringelassen.
\usepackage[usenames,dvipsnames]{xcolor}
\addbibresource{reference.bib}
% Das ist die Unterstützung für die deutschen Umlaute. Keine Ahnung wie das genau funktioniert, Hände weg!

\catcode`\ä = \active \catcode`\Ä = \active \catcode`\ö = \active \catcode`\Ö = \active \catcode`\ü = \active
\catcode`\Ü = \active

\defÄ{"A}
\defä{"a}
\defÖ{"O}
\defö{"o}
\defÜ{"U}
\defü{"u}

% Schriftabkürzungen

\newcommand{\BA}{{\mathbb{A}}}

\newcommand{\BC}{{\mathbb{C}}}

\newcommand{\BF}{{\mathbb{F}}}

\newcommand{\BL}{{\mathbb{L}}}

\newcommand{\BN}{{\mathbb{N}}}

\newcommand{\BV}{{\mathbb{V}}}

\newcommand{\BZ}{{\mathbb{Z}}}

\newcommand{\FC}{{\mathcal{C}}}

\newcommand{\FM}{{\mathcal{M}}}
\newcommand{\FN}{{\mathcal{N}}}

\newcommand{\FP}{{\mathcal{P}}}

\newcommand{\FV}{{\mathcal{V}}}

\newcommand{\bfv}{{\mathbf{v}}}
\newcommand{\bfw}{{\mathbf{w}}}
\newcommand{\bfl}{{\bm{\lambda}}}
\newcommand{\Hom}{{\text{Hom}}}
\newcommand{\g}{{\mathfrak{g}}}
\newcommand{\proj}{{\text{Proj}}}
% Konfiguration der Theorem-Umgebungen. Dazu hab ich oben das amsthm-package geladen.

\theoremstyle{plain}
\newtheorem{thm}{Theorem}[section]

\newtheorem{lemma}[thm]{Lemma}
\newtheorem{prop}[thm]{Proposition}

\theoremstyle{definition}
\newtheorem{defn}[thm]{Definition}

\newtheorem{rem}[thm]{Remark}
%\newtheorem{defn}{Definition}[section]

% Umdefinition des Phi und Epsilon, so sind sie viel schöner.

\renewcommand{\phi}{\varphi}

% Hier kann man Operatoren definieren, die nicht standardmässig implementiert sind. Diese Syntax finde ich am besten.
\newcommand{\GL}{\mathop{\rm GL}\nolimits}
%\newcommand{\ker}{\mathop{\rm ker}\nolimits}
% Das ist kein eigentlicher Operator, sondern eine Verkettung von bekannten, daher die andere Syntax.

% Das REnewcommand verwendet man, um bereits definierte Dinge umzudefinieren. (Ohne das re gibts eine Fehlermeldung)
\newcommand{\kvar}{\mathop{\rm KVar}\nolimits}
\newcommand{\kexp}{\mathop{\rm KExpVar}\nolimits}
\newcommand{\spec}{\mathop{\rm Spec}\nolimits}
\newcommand{\expm}{\mathscr{E}xp\mathscr{M}}
\newcommand{\lla}{\left\langle }
\newcommand{\rra}{\right\rangle}
\newcommand{\n}{{\text{nil}}}
\newcommand{\re}{{\text{reg}}}
\newcommand{\im}{{\text{Im}}}
\newcommand{\Ad}{{\text{Ad}}}
\newcommand{\tr}{{\text{tr}}}
\newcommand{\Id}{{\text{Id}}}

\parindent0mm

\title{Motivic classes of Nakajima quiver varieties} 
\author{Dimitri Wyss} 
\begin{document}

\thispagestyle{empty}
\maketitle

\begin{abstract}
\noindent We prove, that Hausel's formula for the number of rational points of a Nakajima quiver variety over a finite field also holds in a suitable localization of the Grothendieck ring of varieties. In order to generalize the arithmetic harmonic analysis in his proof we use Grothendieck rings with exponentials as introduced by Cluckers-Loeser and Hrushovski-Kazhdan.
\end{abstract}
\section{Introduction}

Let $\Gamma = (I,E,s,t)$ be a quiver, that is a finite vertex set $I$, a set of arrows $E\subset I\times I$ and maps $s,t:E\rightarrow I$ sending an arrow to its source and target. In \cite{nak94}\cite{nak98} Nakajima associates to $\Gamma$ and two dimension vectors $\bfv,\bfw \in \BN^I$ a smooth algebraic variety $\FM(\bfv,\bfw)$ called Nakajima quiver variety. A combinatorial formula for the Betti numbers of those varieties is proven in \cite{Hau2} using arithmetic methods. More precisely Hausel counts the number of rational points of these varieties over finite fields of large enough characteristic and then deduces their Betti numbers by a theorem of Katz \cite[Theorem 6.1.2.3]{HR08}.\\
	The main result of this article is Theorem \ref{mainth}, where we compute the class of $\FM(\bfv,\bfw)$ in a suitable localization $\mathscr{M}$ of the Grothendieck ring of varieties.	Explicitly let $\FP$ be the set of partitions. For $\lambda \in \FP$ we write $|\lambda|$ for its size and $m_k(\lambda)$ for the multiplicity of $k\in \BN$ in $\lambda$. Given any two partitions $\lambda,\lambda' \in \FP$ we define their inner product as $\lla \lambda,\lambda' \rra = \sum_{i,j\in \BN} \min(i,j)m_i(\lambda)m_j(\lambda').$ Then we prove
	
\begin{thm}\label{mainth} For a fixed dimension vector $\bfw \in \BN^I$  the motivic classes of the Nakajima quiver varieties $\FM_{\alpha,\chi}(\bfv,\bfw)$ in $\mathscr{M}$ are given by the generating function
	\begin{eqnarray}\label{for1} \sum_{\bfv \in \BN^I} [\FM(\bfv,\bfw)] \BL^{d_{\bfv,\bfw}}T^\bfv = \frac{\sum_{\bfl \in \FP^I} \frac{\prod_{e\in E} \BL^{\lla \lambda_{s(e)},\lambda_{t(e)}\rra} \prod_{i\in I} \BL^{\lla 1^{w_i},\lambda_i \rra} }{\prod_{i\in I} \BL^{\lla \lambda_i,\lambda_i \rra} \prod_k \prod_{j=1}^{m_k(\lambda_i)} (1-\BL^{-j})}T^{|\bfl|}}{\sum_{\bfl \in \FP^I} \frac{\prod_{e\in E} \BL^{\lla \lambda_{s(e)},\lambda_{t(e)}\rra}}{\prod_{i\in I} \BL^{\lla \lambda_i,\lambda_i \rra} \prod_k \prod_{j=1}^{m_k(\lambda_i)} (1-\BL^{-j})}T^{|\bfl|}},\end{eqnarray}
where $d_{\bfv,\bfw}$ denotes half the dimension of $\FM(\bfv,\bfw)$ and $\BL$ the class of the affine line in $\mathscr{M}$.
\end{thm}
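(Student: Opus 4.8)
The plan is to follow Hausel's proof in \cite{Hau2} step by step, but to carry out every point count motivically, in the Grothendieck ring with exponentials $\expm$ imported from Cluckers--Loeser and Hrushovski--Kazhdan, so that the arithmetic Fourier transform over $\BF_q$ is replaced by the motivic one; the exponential classes will recombine at the end into an honest class of $\mathscr{M}$. Write $\FM(\bfv,\bfw)$ as a quotient of (a semistable locus in) a level set $\mu^{-1}(\alpha)$ of the moment map $\mu\colon\FR_{\bfv,\bfw}\to\g_\bfv:=\bigoplus_{i\in I}\mathfrak{gl}_{v_i}$ on the doubled, framed representation space $\FR_{\bfv,\bfw}$, with $\GL_\bfv=\prod_{i\in I}\GL_{v_i}$ acting freely for generic parameters. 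Since each $\GL_{v_i}$ is a special group, this quotient is a Zariski-locally trivial $\GL_\bfv$-torsor, so in $\mathscr{M}$, where $[\GL_\bfv]$ is invertible, $[\FM(\bfv,\bfw)]$ equals the stacky class $[\mu^{-1}(\alpha)^{s}_{\bfv,\bfw}]/[\GL_\bfv]$. For generic $\alpha$ the entire level set is stable and this is immediate; for $\alpha=0$ one first runs the King stability stratification --- every framed representation of the preprojective algebra contains a canonical, stable subrepresentation generated by the image of the framing, with arbitrary unframed quotient --- which, after tracking the affine-space exponents of the strata, expresses $[\mu^{-1}(0)^{s}_{\bfv,\bfw}]$ in terms of the full stacky classes $[\mu^{-1}(0)_{\bfv,\bfw}]/[\GL_\bfv]$ and $[\mu^{-1}(0)_{\bfv,0}]/[\GL_\bfv]$ and produces the ratio shape of (\ref{for1}). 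In either case everything reduces to computing $[\mu^{-1}(\alpha)_{\bfv,\bfw}]$.

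Here the plan is to invoke the motivic Fourier transform: writing $\mathscr{E}(h)$ for the class in $\expm$ of a function $h$ to $\BA^1$, it identifies $[\mu^{-1}(\alpha)_{\bfv,\bfw}]$ with $\BL^{-\dim\g_\bfv}$ times the pushforward to a point of $\mathscr{E}\!\big(\lla\mu(\cdot)-\alpha,\cdot\rra\big)$ on $\FR_{\bfv,\bfw}\times\g_\bfv$, where $\lla\cdot,\cdot\rra$ is the trace pairing; this is the motivic shadow of $\#f^{-1}(0)(\BF_q)=q^{-N}\sum_{x,\rho}\psi(\lla f(\rho),x\rra)$ and rests on the vanishing of $\int_{\BA^1}\mathscr{E}(c\cdot\,{-})$ for $c\neq0$. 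Because $\mu$ is bilinear, arrow by arrow and vertex by vertex, in the two ``doubled'' matrices of each arrow and in the two framing maps at each vertex, the pairing $\lla\mu(\rho),x\rra$ is a sum of independent terms and $\mathscr{E}$ factors as a product over $E$ and over $I$. Integrating over the second matrix of each arrow and over one framing map at each vertex kills all exponentials and imposes a linear condition; integrating over the remaining variables then leaves, over the locus where the matrix at vertex $i$ lies in a fixed adjoint orbit $\FO_i\subset\mathfrak{gl}_{v_i}$, the locally constant class $\prod_{e\in E}\BL^{v_{s(e)}v_{t(e)}+\dim\Hom_{k[t]}(\FO_{s(e)},\FO_{t(e)})}\prod_{i\in I}\BL^{v_iw_i+w_i\dim\ker\FO_i}$, the arrow exponent being the dimension of the intertwiner space of the corresponding $k[t]$-modules and $\dim\ker\FO_i$ the number of Jordan blocks with eigenvalue $0$. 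Using $[\{x\in\mathfrak{gl}_{v_i}\ \text{in}\ \FO_i\}]=[\GL_{v_i}]/[Z_{\GL_{v_i}}(\FO_i)]$, summing over orbits and dividing by $[\GL_\bfv]$ --- the Fourier prefactor $\BL^{-\dim\g_\bfv+\sum_iv_iw_i+\sum_ev_{s(e)}v_{t(e)}}$ being exactly $\BL^{d_{\bfv,\bfw}}$ --- turns the left-hand side of (\ref{for1}) into an explicit sum over tuples of adjoint orbits $(\FO_i)_{i\in I}$.

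It then remains to evaluate this orbit sum. An adjoint orbit in $\mathfrak{gl}_{v_i}$ is a partition-valued function on its eigenvalues, and the centraliser, the intertwiner dimensions and $\dim\ker$ all factorise over eigenvalues; hence, by a motivic plethystic (power-structure) identity over the $\BA^1$ of eigenvalues, the series factors as its fibre over $0\in\BA^1$ times an ``$\BA^1\setminus\{0\}$'' factor into which the framing never enters. The fibre over $0$ is a sum over $\bfl=(\lambda_i)\in\FP^I$; using $[Z_{\GL_n}(u_\lambda)]=\BL^{\lla\lambda,\lambda\rra}\prod_k\prod_{j=1}^{m_k(\lambda)}(1-\BL^{-j})$, $\dim\ker u_\lambda=\ell(\lambda)$ (whence the vertex exponent $w_i\dim\ker$ contributes $\BL^{\lla 1^{w_i},\lambda_i\rra}$), and $\dim\Hom_{k[t]}(u_\lambda,u_{\lambda'})=\lla\lambda,\lambda'\rra$, one reads off exactly the numerator of (\ref{for1}), and with $\bfw=0$ the denominator. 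Finally the ``$\BA^1\setminus\{0\}$'' factor is disposed of: for generic $\alpha$ it must equal the reciprocal of the denominator, because $\mu^{-1}(\alpha)_{\bfv,0}$ is empty for $\bfv\neq0$ (a moment map has zero trace, so only $\alpha\cdot\bfv=0$ admits representations) and hence the whole unframed $\alpha$-series collapses to $1$; for $\alpha=0$ it is the same class in the framed and unframed series of the ratio and simply cancels. In both cases the left-hand side of (\ref{for1}) equals its right-hand side, which --- a priori only the value of a class in $\expm[[T]]$ --- thus lies in $\mathscr{M}[[T]]$, as required.

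The main obstacles, I expect, are twofold. First, one must set up the motivic Fourier transform on $\expm$ with the required vanishing and inversion properties, and verify that the final answer genuinely descends from $\expm$ to $\mathscr{M}$; this is exactly the machinery of Cluckers--Loeser and Hrushovski--Kazhdan that the introduction promises to use. Second, and more seriously, one must carry out the motivic bookkeeping of adjoint orbits in $\mathfrak{gl}_n$: stratifying by combinatorial type over the $\BA^1$ of eigenvalues, controlling the eigenvalue configuration spaces through the motivic power structure, and proving that the ``$\BA^1\setminus\{0\}$'' contribution really is the same class in the framed and unframed series, so that it cancels (resp. is a reciprocal). Over a finite field this last point is a one-line manipulation of cardinalities; motivically it is the heart of the argument, since it must be promoted to an identity in a localised Grothendieck ring.
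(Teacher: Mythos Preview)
Your overall arc matches the paper: reduce to computing $[\mu^{-1}(\alpha)]/[\GL_\bfv]$ via the principal bundle structure, apply the motivic Fourier transform to convert this into a sum over the ``commuting variety'' $a_\varrho=\{(\phi,X):\varrho(X)\phi=0\}$ weighted by the exponential $\lla -\pi,\mathbf{1}_\bfv\rra$, then factor the resulting generating series into a nilpotent piece and a framing-independent piece, and kill the latter using $\Phi(\mathbf{0})=1$.

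The main divergence is in \emph{how} you obtain that factorisation. You propose to stratify by full adjoint orbits in $\mathfrak{gl}_{v_i}$ and then separate the eigenvalue $0$ from the rest via a motivic power-structure/plethystic identity over $\BA^1$. You correctly flag this as the hard step, and indeed making it rigorous in $\expm$ would be delicate: adjoint orbits are parametrised by partition-valued functions on finite subsets of $k$, which is not an algebraic family one can sum over naively. The paper bypasses this entirely. Instead it uses the elementary Fitting/Jordan decomposition $V_i=N(X_i)\oplus R(X_i)$ (generalised $0$-eigenspace plus the image of a high power of $X_i$) to stratify $a_\varrho$ by $\bfv'=(\dim N(X_i))_i$. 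A direct check shows that the framing maps land in the nilpotent summand (since $X_i\phi_i=0$), the edge maps split block-diagonally, and each stratum is a Zariski-locally trivial $G_{\bfv'}\times G_{\bfv-\bfv'}$-fibration over $a^{\n}_{\varrho_{\bfv',\bfw}}\times a^{\re}_{\varrho_{\bfv-\bfv',0}}$. Crucially, on the nilpotent factor the trace vanishes, so the exponential disappears and $\Phi_\n(\bfw)$ lives in $\mathscr{M}[[T]]$ from the outset. The identity $\Phi(\bfw)=\Phi_\n(\bfw)\Phi_\re$ then follows by pure geometry, with no power structure needed; $\Phi_\re$ is eliminated by $\Phi(\mathbf{0})=1$ exactly as you say.

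A second, smaller difference: for $\alpha=0$ you invoke a King stability stratification to produce the ratio. The paper instead shows directly that $[\FM_{0,\chi}(\bfv,\bfw)]=[\FM_1(\bfv,\bfw)]$ in $\mathscr{M}$ via a one-parameter family over $\BA^1$ with a contracting $k^\times$-action and Bia{\l}ynicki--Birula, so the whole computation is done once at $\alpha=1$ where all points are stable.

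In short: your proposal is sound in spirit, but the step you single out as ``the heart of the argument'' is precisely the one the paper avoids, replacing your eigenvalue plethysm by an elementary nilpotent/invertible splitting and a locally trivial fibration lemma.
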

This implies in particular, that $[\FM(\bfv,\bfw)]$ is given by a polynomial in $\BL$. \\	
The formula is the expected generalization of the count i.e. the cardinality $q$ of the finite field is simply replaced by $\BL$. However this generalization is not straightforward as we have to find a motivic analogue of the arithmetic harmonic analysis approach of \cite{Hau2}. We use the idea of \cite{CL}\cite{hk09} to 'add exponentials' to $\mathscr{M}$ in order to define a naive motivic Fourier transform. \\
The author was informed by Ben Davison and Sergey Mozgovoy, that they both can prove formula (\ref{for1}) using different methods.\\
	
\textit{Acknowledgments:} I would like to thank Tam\'{a}s Hausel for his guidance and helpful comments throughout the whole project and Bal\'{a}zs Szendr\"{o}i for his notes who started it. Furthermore I'm grateful to Fran\c{c}ois Loeser and Andrew Morrison for their explanations.\\ 
 	
\textit{Conventions:} Throughout the whole article we will work over an algebraically closed field $k$ of characteristic $0$. By a variety we mean a separated reduced scheme of finite type over $k$.

\section{Grothendieck rings with exponentials and a naive Fourier transform}

In this section we start by introducing various Grothendieck rings with exponentials following closely \cite{CLL}. This allows us to define a naive Fourier transform and prove a Fourier inversion formula for motivic functions. We should mention that nothing in this section is new, but rather a special case of the theory developed in \cite{CL}.\\
%Grothendieck rings with exponentials were first introduced in \cite{CL},\cite{hk09}, 
%In this section we follow closely \cite{CLL}, except that the Fourier transform here is ???\\
 The \textit{Grothendieck ring of varieties}, denoted by $\kvar$, is the quotient of the free abelian group generated by varieties modulo the relations
\[ X - Y\]
if $X$ and $Y$ are isomorphic and 
\[ X - Z - U,\] 
for $Z\subset X$ a closed subvariety and $U=X\setminus Z$. The multiplication is given by $[X]\cdot [Y] = [X\times Y]$, where we write $[X]$ for the class of a variety $X$ in $\kvar$. \\
The \textit{Grothendieck ring with exponentials} $\kexp$ is defined similarly. The generators are pairs $(X,f)$, where $X$ is a variety and $f:X \rightarrow \BA^1=\spec(k[T])$ a morphism. We impose three kinds of relations on the free abelian group generated by those pairs. 
\begin{enumerate}
\item[(i)] For two varieties $X,Y$, a morphism $f:X\rightarrow \BA^1$ and an isomorphism $u:Y\rightarrow X$ the relation
\[ (X,f) - (Y,u\circ f).\]
\item[(ii)] For a variety $X$, a morphism $f:X\rightarrow \BA^1$, a closed subvariety $Z\subset X$ and $U=X\setminus Z$ the relation
\[ (X,f) - (Z,f_{|Z})- (U,f_{|U}).\]
\item[(iii)] For a variety $X$ and $pr_{\BA^1}:X\times \BA^1\rightarrow \BA^1$ the projection the relation
\[ (X\times \BA^1,pr_{\BA^1}).\]
\end{enumerate} 
The class of $(X,f)$ in $\kexp$ will be denoted by $[X,f]$. We define the product of two generators $[X,f]$ and $[Y,g]$ as 
\[ [X,f]\cdot [X,g] = [X\times Y, f\circ pr_X + g\circ pr_Y],\]
where $f\circ pr_X + g\circ pr_Y: X\times Y \rightarrow \BA^1$ is the morphism sending $(x,y)$ to $f(x)+g(y)$. This gives $\kexp$ the structure of a commutative ring.\\
Denote by $\BL$ the class of $\BA^1$ resp. $(\BA^1,0)$ in $\kvar$ resp. $\kexp$. The localizations of $\kvar$ and $\kexp$ with respect to the the multiplicative subset generated by $\BL$ and $\BL^n-1$, where $n\geq 1$ are denoted by $\mathscr{M}$ and $\expm$.\\
For a variety $S$ there is a straight forward generalization of the above construction to obtain the \textit{relative Grothendieck rings} $\kvar_S,\kexp_S,\mathscr{M}_S$ and $\expm_S$. For example generators of $\kexp_S$ are pairs $(X,f)$ where $X$ is a $S$-variety (i.e. a variety with a morphism $X\rightarrow S$) and $f:X \rightarrow \BA^1$ a morphism. The class of $(X,f)$ in $\kexp_S$ will be denoted by $[X,f]_S$ or simply $[X,f]$ if the base variety $S$ is clear from the context. \\
There is a natural map
\begin{align*} \kvar_S &\rightarrow \kexp_S\\
							[X] &\mapsto [X,0]
\end{align*}
and similarly $\mathscr{M}_S \rightarrow \expm_S$, which are both injective ring homomorphisms by \cite[Lemma 1.1.3]{CLL}. Hence we don't need to distinguish between $[X]$ and $[X,0]$ for a $S$-variety $X$.\\
%\begin{lemma}\cite[Lemma 1.1.3]{CLL} For any variety $S$ the natural ring homomorphisms $\kvar_S \rightarrow \kexp_S$ and $\mathscr{M}_S\rightarrow \expm_S$ are injective.
%\end{lemma}
For a morphism of varieties $u: S \rightarrow T$ we have induced maps
\begin{align*} u_!&:\kexp_S \rightarrow \kexp_T, \ \ \ [X,f]_S \mapsto [X,f]_T\\
u^*&:\kexp_T \rightarrow \kexp_S, \ \ \ [X,f]_T \mapsto [X \times_T S, f \circ pr_X]_S. \end{align*}
In general $u^*$ is a morphism of rings and $u_!$ a morphism of additive groups. However it is straightforward to check that for any $u:S \rightarrow T$ and any $\phi\in \kexp_S$ we have
\begin{eqnarray}\label{genu} u_!(\BL \cdot\phi)=\BL\cdot u_!(\phi),
\end{eqnarray}
where $\BL$ denotes the class of $\BA^1\times S$ and $\BA^1\times T$ in $\kexp_S$ and $\kexp_T$ respectively.\\
Elements of $\kexp_S$ can be thought of as motivic functions on $S$. The evaluation of $\phi \in \kexp_S$ at a point $s: \spec(k) \rightarrow S$ is simply $s^*(\phi)\in \kexp_{\spec(k)}=\kexp$. Computations with these motivic functions can sometimes replace finite field computations. More precisely let $\BF_q$ be a finite field and fix a non-trivial additive character $\psi:\BF_q \rightarrow \BC^\times$. Assume that $S$, $X\rightarrow S$ and $f:X \rightarrow \BA^1$ are also defined over $\BF_q$. Then the class of $(X,f) \in \kexp_S$ corresponds to the function 

\[S(\BF_q) \rightarrow \BC, \ \ \ s \mapsto \sum_{x \in X_s(\BF_q)} \psi(f(x)).\]
Furthermore for a morphism $u: S \rightarrow T$ the operations $u_!$ and $u^*$ correspond to summation over the fibres of $u$ and composition with $u$ respectively.\\
An important identity for computing character sums over finite fields is 

\begin{eqnarray*} \sum_{v\in V} \psi(f(v)) = \begin{cases} q^{\dim(V)} &\text{ if } f=0\\
0 &\text{ else,}\end{cases}\end{eqnarray*}
where $V$ is a $\BF_q$ vector space and $f\in V^*$ a linear form.\\
To establish a similar identity in the motivic setting we let $V$ be a finite dimensional vector space over $k$ and $S$ a variety. We replace the linear form above with a family of affine linear forms i.e. a morphism $g=(g_1,g_2):X \rightarrow V^*\times k$, where $X$ is a $S$-variety. Then we define $f$ to be the morphism
\begin{align*} f:X\times V &\rightarrow k 	\\
								(x,v) &\mapsto \left\langle g_1(x),v\right\rangle + g_2(x).
\end{align*}
Finally we put $Z= g_1^{-1}(0)$. 

\begin{lemma}\label{orth} With the notation above we have the relation
\[ [X\times V,f] = \BL^{\dim V}[Z,{g_2}_{|Z}] \]
in $\kexp_S$.
\end{lemma}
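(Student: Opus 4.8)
The plan is to reduce everything to the three defining relations of $\kexp_S$ together with the product structure. Write $n=\dim V$, fix a basis $e_1,\dots,e_n$ of $V$ with dual basis $e_1^*,\dots,e_n^*$, and put $a_j:=e_j^*\circ g_1:X\rightarrow k$, so that $f(x,v)=\sum_j v_j a_j(x)+g_2(x)$ when $v=\sum_j v_je_j$. Since $Z=g_1^{-1}(0)=\bigcap_j a_j^{-1}(0)$ is a closed subvariety of $X$, the product $Z\times V$ is closed in $X\times V$, and relation (ii) gives
\[ [X\times V,f]=[Z\times V,f_{|Z\times V}]+[U\times V,f_{|U\times V}],\qquad U:=X\setminus Z. \]
On $Z$ all the $a_j$ vanish, so $f_{|Z\times V}=g_2\circ pr_Z$; hence by the product rule in $\kexp_S$ and the identity $[V,0]=[\BA^n,0]=\BL^n$ we get $[Z\times V,f_{|Z\times V}]=[Z,g_2{}_{|Z}]\cdot[V,0]=\BL^{\dim V}[Z,g_2{}_{|Z}]$. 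Thus the lemma follows once we establish the vanishing $[U\times V,f_{|U\times V}]=0$.

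For the vanishing I would stratify $U$ into the locally closed subvarieties
\[ U_i:=\{\,x\in U : a_i(x)\neq 0\ \text{and}\ a_l(x)=0\ \text{for all}\ l<i\,\},\qquad i=1,\dots,n. \]
These are pairwise disjoint and cover $U$ (for $x\in U$ one has $g_1(x)\neq 0$, so $a_i(x)\neq 0$ for some $i$), and applying relation (ii) repeatedly along the filtration of $U$ by the closed subsets $\{a_1=\dots=a_i=0\}$ reduces us to showing $[U_i\times V,f_{|U_i\times V}]=0$ for each $i$. On $U_i$ the function $a_i$ is invertible, so the morphism of $S$-varieties
\[ U_i\times V\longrightarrow U_i\times\BA^{n-1}\times\BA^1,\qquad (x,v)\longmapsto\bigl(x,\ (v_j)_{j\neq i},\ f(x,v)\bigr) \]
is an isomorphism: its inverse sends $(x,(v_j)_{j\neq i},t)$ to $(x,v)$ with $v_i=a_i(x)^{-1}\bigl(t-g_2(x)-\sum_{j\neq i}v_ja_j(x)\bigr)$, which is a morphism precisely because $a_i$ is nowhere zero on $U_i$. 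Under this isomorphism $f$ corresponds to the projection $pr_{\BA^1}$, so relation (i) followed by relation (iii) yields $[U_i\times V,f_{|U_i\times V}]=[(U_i\times\BA^{n-1})\times\BA^1,pr_{\BA^1}]=0$, as wanted.

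The one point deserving attention is that the whole computation must be carried out in the relative ring $\kexp_S$: the varieties $Z\times V$, $U_i\times V$, $U_i\times\BA^{n-1}$ are $S$-varieties via the structure map $X\rightarrow S$, the stratification inclusions and the coordinate-change isomorphism above are maps of $S$-varieties, and relations (i)--(iii) hold in $\kexp_S$ verbatim; here the symbol $\BL$ in $\BL^{\dim V}$ is the class of $\BA^1\times S$ as in \eqref{genu}. All of this is routine. I do not expect a genuine obstacle; the only mildly delicate bookkeeping is the choice of stratification of $U$, which is needed so that on each stratum a single coordinate function $a_i$ is invertible and can be used to straighten $f$ into a coordinate projection, at which point relation (iii) does the rest.
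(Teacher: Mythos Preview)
Your argument is correct and self-contained. The paper takes a different, more black-box route: it first reduces to $S=X$ via \eqref{genu}, then invokes \cite[Lemma 1.1.8]{CLL} to reduce to checking the identity after pulling back along each point $x:\spec(k)\rightarrow X$, and finally appeals to \cite[Lemma 1.1.11]{CLL} for that pointwise statement. In other words, the paper uses a ``separation of points'' principle for $\kexp_S$ and then the one-variable case, whereas you work globally and directly with the relations (i)--(iii): you split off $Z\times V$ at once and then stratify $U$ so that on each piece one of the linear coordinates $a_i$ is invertible, allowing you to straighten $f$ into the projection $pr_{\BA^1}$ and kill the class by relation (iii). The advantage of your approach is that it is elementary and exhibits explicitly where each defining relation is used, without importing the pointwise machinery from \cite{CLL}; the paper's approach is shorter on the page but hides the work in the two cited lemmas.
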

\begin{proof} By using (\ref{genu}) we may assume $S=X$. Now because of \cite[Lemma 1.1.8]{CLL} it is enough to check for each point $x\in X$ the identity 
\[x^*([X\times V,f]) = x^*(\BL^{\dim V}[Z,{g_2}_{|Z}])\] and this is exactly Lemma $1.1.11$ of \textit{loc. cit.}
\end{proof}

%The next lemma establishes this identity in the motivic setting and is basically lemma $1.1.11$ of \cite{CLL}.
%\begin{lemma}\label{orth} Let $X$ be a $S$-variety and $f:X\times_k \BA^n_k \rightarrow \BA^1_k$ a morphism such that for each $x\in X$ the restriction $f_x:\BA^n_{k(x)} \rightarrow \BA^1_{k(x)}$ is affine linear. Assume that $Z = \{x\in X \ | \ f_x \text{ is constant%}\}$ is a variety (is probably always te case). Then we have a well defined function $f_Z:Z \rightarrow \BA^1_k$ and the relation 
%\[ [X \times_k \BA^n_k,f] = \BL^n[Z,f_Z] \]
%holds in $\kexp_S$.
%\end{lemma}
%\begin{proof} By using \ref{genu} we may assume $S=X$. We have a decompostion $[X \times \BA^n,f]_X = [Z\times \BA^n,f]_X \sqcup [U\times \BA^n,f]_X$ with $U = X\setminus Z$. Now $[U\times \BA^n,f]_X = 0$, because by $1.1.8$ from \cite{CLL} it is enough to check this on each point of $U$ seperatly and there it follows from lemma $1.1.11$ of loc.cit. The equality $[Z\times \BA^n,f]_X = \BL^n[Z,f_Z]_X$ is proven similarly.
%\end{proof}

Now we're ready to define a \textit{naive motivic Fourier transform} for functions on a finite dimensional $k$-vectorspace $V$ and prove an inversion formula. All of this is a special case of \cite[Section 7.1]{CL}.

\begin{defn} Let $p_V:V\times V^* \rightarrow V$ and $p_{V^*}:V\times V^* \rightarrow V^*$ be the obvious projections. 
\textit{The naive Fourier transformation} $\mathcal{F}_V$ is defined as 
\begin{align*} \mathcal{F}_V:\kexp_{V} &\rightarrow \kexp_{V^*} \\
								\phi &\mapsto p_{V^*!}(p_V^*\phi \cdot[V\times V^*,\lla,\rra]). \end{align*}
								
Here $\left\langle ,\right\rangle:V\times V^* \rightarrow k$ denotes the natural pairing.
\end{defn}
We will often write $\mathcal{F}$ instead of $\mathcal{F}_V$ when there's no ambiguity.\\
Notice that $\mathcal{F}$ is a homomorphism of groups and thus it is worth spelling out the definition in the case when $\phi = [X,f]$ is the class of a generator in $\kexp_V$. Letting $u:X\rightarrow V$ be the structure morphism we simply have 
\begin{eqnarray}\label{ftgen}
\mathcal{F}([X,f]) = [X \times V^*, f\circ pr_X+\left\langle u\circ pr_X,pr_{V^*}\right\rangle].
\end{eqnarray}

Now we're ready to prove an inversion formula for the naive Fourier transform.
\begin{prop}\label{finv} For every $\phi \in \kexp_{V}$  we have the identity
\[\mathcal{F}(\mathcal{F}(\phi)) = \BL^{\dim(V)} \cdot i^*(\phi),\]
where $i:V\rightarrow V$ is multiplication by $-1$.
\end{prop}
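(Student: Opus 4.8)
The strategy is to reduce the identity to a single Fourier-type computation on a generator via the linearity of $\mathcal{F}$ and the orthogonality Lemma \ref{orth}. First I would reduce to the case $\phi = [X,f]$, since $\mathcal{F}$ is a homomorphism of additive groups and both sides of the claimed identity are additive in $\phi$ (the operation $i^*$ is a ring homomorphism, in particular additive). So it suffices to prove $\mathcal{F}(\mathcal{F}([X,f])) = \BL^{\dim V} i^*([X,f])$ for $X$ a $V$-variety with structure morphism $u:X\to V$ and $f:X\to \BA^1$ arbitrary.

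Next I would unwind the two applications of $\mathcal{F}$ using the explicit formula (\ref{ftgen}). Applying it once gives $\mathcal{F}([X,f]) = [X\times V^*, \; f\circ pr_X + \lla u\circ pr_X, pr_{V^*}\rra]$, viewed in $\kexp_{V^*}$ with structure morphism $pr_{V^*}$. Applying (\ref{ftgen}) a second time (now the ambient vector space is $V^*$, whose dual is canonically $V$) yields
\[
\mathcal{F}(\mathcal{F}([X,f])) = \bigl[\,X\times V^*\times V,\; f\circ pr_X + \lla u\circ pr_X, pr_{V^*}\rra + \lla pr_{V^*}, pr_V\rra\,\bigr]
\]
as a class in $\kexp_V$ with structure morphism $pr_V$. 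The point is now to integrate out the $V^*$-variable. Regard $X\times V$ as the base $S$-variety (with $S=V$, base map $pr_V$) and think of $X\times V^* \times V \to X\times V$ as a $V^*$-bundle. The function in the bracket, as a function of the $V^*$-coordinate $\xi$, is affine linear: it equals $\lla u(x), \xi\rra + \lla \xi, v\rra + f(x) = \lla u(x)+v, \xi\rra + f(x)$. So in the notation of Lemma \ref{orth} we take $g_1(x,v) = u(x)+v \in V^{**}=V^*{}^*$ wait --- more precisely $g_1:X\times V \to (V^*)^*\cong V$ is $(x,v)\mapsto u(x)+v$ and $g_2(x,v) = f(x)$. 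Then $Z = g_1^{-1}(0) = \{(x,v): v = -u(x)\}$, which is isomorphic to $X$ via $x\mapsto (x,-u(x))$, and under this isomorphism $g_2|_Z$ corresponds to $f$ and the structure map to $V$ corresponds to $-u = i\circ u$. Lemma \ref{orth} therefore gives $\mathcal{F}(\mathcal{F}([X,f])) = \BL^{\dim V^*}[Z, g_2|_Z] = \BL^{\dim V}[X, f]$ with structure map $i\circ u$, which is exactly $\BL^{\dim V} i^*([X,f])$.

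The main obstacle is bookkeeping rather than conceptual: one must keep careful track of which projections are the "$\mathrm{pr}_X$" of Lemma \ref{orth} and which coordinate is being integrated, and check that the identification $V^{**}\cong V$ is used consistently so that the quadratic term $\lla pr_{V^*}, pr_V\rra$ really does combine with $\lla u\circ pr_X, pr_{V^*}\rra$ into a single affine-linear form in the $V^*$-variable. One also needs $\dim V^* = \dim V$ (immediate) and the fact, used implicitly when invoking Lemma \ref{orth} with base $S = X\times V$ and then pushing to $V$, that $pr_{V^*!}$ commutes with the relevant base change --- but since we are free to take the base in Lemma \ref{orth} to be $X\times V$ and then apply $pr_{V\,!}$ to both sides, and $pr_{V\,!}(\BL\cdot -) = \BL\cdot pr_{V\,!}(-)$ by (\ref{genu}), no difficulty arises. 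I expect the whole argument to be short once the two applications of (\ref{ftgen}) are written out.
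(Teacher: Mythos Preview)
Your proposal is correct and follows essentially the same route as the paper: reduce to a generator, iterate (\ref{ftgen}), combine the two pairings into a single affine-linear form in the $V^*$-variable, and apply Lemma~\ref{orth} with $Z=\{(x,v):u(x)+v=0\}\cong X$ carrying the structure map $i\circ u$. The one unnecessary detour is your worry about applying Lemma~\ref{orth} over $X\times V$ and then pushing forward via (\ref{genu}); since Lemma~\ref{orth} is stated over an arbitrary base $S$, you can apply it directly with $S=V$ and the $S$-variety $X\times V$ (structure map $pr_V$), which is exactly what the paper does.
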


%Before we go to the proof, let's fix some notation. For $Z$-varieties $X \rightarrow Z$ and $Y\stackrel{u}{\rightarrow} Z$ we'll sometimes write $X \times_Z^v Y$ for their fibre product, when there are several possible morphisms $Y \rightarrow Z$. Also we'll always denote by $pr_i$ the porjection onto the $i$-th coordinate in a fibre-product $X_1 \times_{Z_1} X_2 \dots \times_{Z_{n-1}} X_n$. To compute Fourier transforms we will often use the observation, that we have for every $\BA^n_k$-variety $X$ we have an isomorphism of $\BA^n_k$-varieties
%\begin{eqnarray}\label{simp} X \times_{\BA^n}^{p_1} \BA^{2n} \cong X \times \BA^n, \end{eqnarray}
%where the structure morphisms on the lhs and rhs are $p_2 \circ pr_2$ resp. $pr_2$.

\begin{proof} Since $\mathcal{F}$ is a group homomorphism it is enough to prove the lemma for $\phi = [X,f]$ with $X\stackrel{u}{\rightarrow}V $.  Iterating (\ref{ftgen}) we get
\[\mathcal{F}(\mathcal{F}([X,f]))= [X\times V \times V^*, f\circ pr_X + \lla u\circ pr_X+pr_V,pr_{V^*}\rra].\]
Now we can apply Lemma \ref{orth} with $Z=\{ (x,v)\in X \times V\  |\ u(x) +v=0\}$ to obtain
\[[X\times V \times V^*, f\circ pr_X + \lla u\circ pr_X+pr_V,pr_{V^*}\rra] = \BL^{\dim V^*} [Z, f\circ pr_X].\]
Notice that $Z$ is a $V$-variety via projection onto the second factor and hence the projection onto the first factor induces a $V$-isomorphism $Z \cong (X\stackrel{i\circ u}{\rightarrow} V)$, which gives the desired result. 	
\end{proof}

\section{Motives of moment map equations}\label{msq}

The naive Fourier transform enables us to perform computations arising from the arithmetic harmonic analysis approach introduced in \cite{Hau1} in the motivic setting. In this section we prove a motivic version of the crucial Proposition 1 of \textit{loc. cit.} on the number of points of certain moment map fibers.\\
Let $G$ be a reductive algebraic group over $k$ with Lie algebra $\mathfrak{g}$ and $\rho:G \rightarrow\GL(V)$ a representation. The derivative of $\rho$ is the Lie algebra representation $\varrho:\mathfrak{g} \rightarrow \mathfrak{gl}_n$. We define the \textit{moment map}
\begin{align*} \mu:V \times V^* \rightarrow \mathfrak{g}^*
\end{align*}
for $(v,w) \in V\times V^*$ and $X \in \mathfrak{g}$ by the formula
\begin{eqnarray}\label{mmap} \left\langle \mu(v,w),X \right\rangle = \left\langle  \varrho(X)(v),w \right\rangle.
\end{eqnarray}

Our goal is now to compute for $\xi \in \mathfrak{g}^*$ the motive of $\mu^{-1}(\xi)$ in $\expm$.

\begin{rem} Notice that $\rho$ induces an action of $G$ on the symplectic vector space $V\times V^*$ by the formula
\[ g\cdot (v,w) = (\rho(g)v,\rho(g^{-1})^*w)\]
and if $k=\BC$ one can check that $\mu$ is indeed a moment map for this action.
%This action preserves the natural holomorphic symplectic structure on $V\times V^*$ and is interchanged with the coadjoint action on $\mathfrak{g}^*$ under $\mu$. One can check  shows that $\mu$ is a honest moment map and for this action.
\end{rem}

We define
\[a_{\varrho} = \left\{ (v,X) \in V \times \mathfrak{g} \ |\ \varrho(X)v=0 \right\},\]
which is a $\g$-variety via he projection onto the second factor $\pi:a_\varrho \rightarrow \g$. Analogous to \cite[Proposition 1]{Hau1} we have

\begin{prop}\label{prop1} For any $\xi \in \g^*$ the identity
\[ [\mu^{-1}(\xi)] = \BL^{\dim V- \dim\g}[a_\varrho,\lla -\pi,\xi\rra]\]
holds in $\expm$. 
\end{prop}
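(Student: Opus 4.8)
The plan is to transport Hausel's Fourier-analytic point count \cite[Proposition~1]{Hau1} to the motivic setting, using Lemma~\ref{orth} twice in place of the additive-character orthogonality relation $q^{-\dim\g}\sum_{X\in\g}\psi(\langle\eta,X\rangle)=[\eta=0]$. The central object will be the morphism
\[ f\colon V\times V^{*}\times\g\longrightarrow k,\qquad f(v,w,X)=\langle\mu(v,w),X\rangle-\langle\xi,X\rangle, \]
which by (\ref{mmap}) equals $\langle\varrho(X)v,w\rangle-\langle\xi,X\rangle$. I will compute the class $[V\times V^{*}\times\g,f]\in\expm$ in two different ways and compare the outcomes.

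For the first computation I regard $X\mapsto f(v,w,X)$ as a family, parametrised by $V\times V^{*}$, of affine linear forms on $\g$ whose linear part is $\mu(v,w)-\xi\in\g^{*}$ and whose constant term vanishes. Lemma~\ref{orth}, applied with the vector space $\g$, the $S$-variety $V\times V^{*}$ (over $S=\spec(k)$), the family of affine forms $g=(\mu-\xi,0)\colon V\times V^{*}\to\g^{*}\times k$, and the locus $Z=g_{1}^{-1}(0)=\mu^{-1}(\xi)$, then gives
\[ [V\times V^{*}\times\g,f]=\BL^{\dim\g}\,[\mu^{-1}(\xi),0]. \]

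For the second computation I use the canonical identification $V\cong(V^{*})^{*}$ and regard $w\mapsto f(v,w,X)$ as a family, parametrised by $V\times\g$, of affine linear forms on $V^{*}$, now with linear part $\varrho(X)v\in V$ and constant term $-\langle\xi,X\rangle$. The zero locus of the linear part is precisely $a_{\varrho}$, and the restriction of the constant term to it is the function $\langle-\pi,\xi\rangle$; moreover, up to reordering the factors, the product $V\times V^{*}\times\g$ carrying $f$ is isomorphic over $S$ to $(V\times\g)\times V^{*}$ carrying the function produced by the Lemma. Hence Lemma~\ref{orth} yields
\[ [V\times V^{*}\times\g,f]=\BL^{\dim V}\,[a_{\varrho},\langle-\pi,\xi\rangle]. \]
Comparing the two expressions and cancelling $\BL^{\dim\g}$, which is invertible in $\expm$, gives the proposition.

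I do not expect a genuine obstacle here: the real content already sits in Section~2, and what remains is essentially bookkeeping. The points requiring care are keeping the two dual pairings $\g^{*}\times\g\to k$ and $V\times V^{*}\to k$ (together with their transposes) straight, so that the restricted constant term in the second step is literally the function $\langle-\pi,\xi\rangle$ of the statement; the reflexivity identification $V\cong(V^{*})^{*}$ needed to view $w\mapsto\langle\varrho(X)v,w\rangle$ as a linear form on $V^{*}$; and the fact, already built into Lemma~\ref{orth}, that a nonzero constant term $g_{2}$ is permitted, which the second application genuinely exploits.
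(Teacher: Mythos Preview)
Your argument is correct and slightly more direct than the paper's. The paper regards $[V\times V^{*}]$ as a class in $\kexp_{\g^{*}}$ (structure map $\mu$), applies the naive Fourier transform $\mathcal{F}_{\g^{*}}$ once, simplifies via Lemma~\ref{orth} to $\BL^{\dim V}[a_{\varrho}]$ in $\kexp_{\g}$, then applies $\mathcal{F}_{\g}$ again and invokes Fourier inversion (Proposition~\ref{finv}) to recover $i^{*}[V\times V^{*}]$; the specialization at $\xi$ comes only at the very end by pulling back along $\xi\colon\spec(k)\to\g^{*}$. You bypass the Fourier-transform packaging entirely: you build $\xi$ into $f$ from the start, work over $\spec(k)$ throughout, and apply Lemma~\ref{orth} twice directly---once in the $\g$-variable and once in the $V^{*}$-variable. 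Since Proposition~\ref{finv} is itself proved by a single application of Lemma~\ref{orth}, the two arguments are morally the same double-orthogonality computation, but your version uses strictly less machinery. The paper's relative computation over $\g^{*}$ has the mild advantage of yielding the identity for all $\xi$ simultaneously as an equality of motivic functions on $\g^{*}$, but for the proposition as stated your route is shorter.
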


\begin{proof} 
We consider $V \times V^*$ as a $\g^*$-variety via the moment map $\mu$. Then by (\ref{ftgen}) the naive Fourier transform of its class in $\kexp_{\g^*}$ is 
\begin{align*} \mathcal{F}([V\times V^*]) = [V \times V^* \times \g, \lla \mu\circ pr_{V\times V^*}, pr_\g\rra]. 
\end{align*}
Now by the definition (\ref{mmap}) of $\mu$ we have
\[ [V \times V^* \times \g, \lla \mu\circ pr_{V\times V^*}, pr_\g\rra] =  [V \times V^* \times \g, \lla(\varrho \circ pr_\g)pr_V,pr_{V^*}\rra].\]
Thus lemma \ref{orth} with $X= V\times \g$ and $Z = a_\varrho$ gives
\[\mathcal{F}([V\times V^*]) = \BL^{\dim V} [a_\varrho].\]
Next we apply $\mathcal{F}$ again and use the inversion lemma \ref{finv} to get
\[ \BL^{\dim \g} i^*[V\times V^*] = \mathcal{F}(\BL^{\dim V} [a_\varrho]) = \BL^{\dim V} [a_\varrho \times \g^*,\lla \pi\circ pr_{a_\varrho},pr_{\g^*}\rra]. \]
Finally passing to $\expm_{\g^*}$ to invert $\BL^{\dim \g}$ and using $(i^*)^2= \Id_{\g^*}$ gives
\[[V\times V^*] = \BL^{\dim V-\dim \g}[a_\varrho \times \g^*,\lla -\pi\circ pr_{a_\varrho},pr_{\g^*}\rra].\]
The result now follows from pulling back both sides allong $\xi:\spec(k) \rightarrow \g^*$.
\end{proof}

\begin{comment}
\subsection{Special groups}

\begin{defn} Let $G$ be an algebraic group acting on a variety $X$. A faithfully flat $G$-invariant morphism $X \rightarrow Y$ is called a \textit{principal $G$-bundle over $Y$}  if 
\begin{align}	X \times G &\rightarrow X\times_Y X\\
							\nonumber	(x,g) &\mapsto (x,xg)
\end{align}
is an isomorphism.
\end{defn}
Every principal $G$-bundle $X\rightarrow Y$ is locally trivial in the \'etale topology, as explained in \cite{Mil} III.4. If it is Zariski-locally trivial, a simple stratification argument shows
\begin{eqnarray}\label{splgr} [X] = [Y][F] \in \kvar. \end{eqnarray}
All principal $G$-bundles we will encounter, will in fact satisfy (\ref{splgr}) because $G$ will always be \textit{special} i.e. every principal $G$-bundle is Zariski-locally trivial. The following proposition summarizes, what we need to know about special groups.

\begin{prop}\label{special} Let $n$ be a natural number.
\begin{itemize}
\item[(a)] The special linear group $\GL_n$ is special.
\item[(b)] The product of two special groups is special
\item[(c)] For any $X \in \mathfrak{gl}_n$ the centralizer $\mathcal{C}_X = \{g\in \GL_n \ |\ gX=Xg\}$ is special.
\end{itemize}
\end{prop}

\begin{proof} Parts (a) and (b) follow from \cite{serre58} lemma 5 and 6 respectively. For (c) we notice, that the vector space of all commuting matrices $\{Y \in \mathfrak{gl}_n\ |\ YX=XY\}$ is a generically free representation in the sense of \cite{Mer13}. The result now follows from propositions 3.13 and 3.16 of \textit{loc. cit}.
\end{proof}
\end{comment}

\section{Nakajima Quiver varieties}

In this section we recall the definition of Nakajima quiver varieties. Almost everything can be found in more detail in \cite{Hau2} or in the original sources \cite{nak94}\cite{nak98}.\\
Let $\Gamma = (I,E)$ be a quiver with $I=\{1,2,\dots,n\}$ the set of vertices and $E$ the set of arrows. We denote by $s(e)$ and $t(e)$ the source and target vertex of an arrow $e\in E$. For each $i\in I$ we fix finite dimensional vector spaces $V_i, W_i$ and write $\bfv=(\dim V_i)_{i\in I}, \bfw=(\dim W_i)_{i\in I} \in \mathbb{N}^I$ for their dimension vectors. From this data we construct the vector space
\[ \BV_{\bfv,\bfw}= \bigoplus_{e\in E} \Hom(V_{s(e)},V_{t(e)}) \oplus \bigoplus_{i\in I} \Hom(W_i,V_i),\]
the algebraic group 
\[ G_\bfv = \prod_{i\in I} \GL(V_i)\]
and its Lie algebra
 \[ \mathfrak{g}_\bfv = \bigoplus_{i\in I} \mathfrak{gl}(V_i).\]
We have a natural representation
\[ \rho_{\bfv,\bfw}: G_\bfv \rightarrow \GL(\BV_{\bfv,\bfw})\]
and its derivative 
\[ \varrho_{\bfv,\bfw}: \mathfrak{g}_\bfv \rightarrow \mathfrak{gl}(\BV_{\bfv,\bfw}).\]
For $g=(g_i)_{i\in I}\ ,X=(X_i)_{i\in I} $ and $\phi=(\phi_e,\phi_i)_{e\in E,i\in I}\in \BV_{\bfv,\bfw}$ they are given by the formulas
\begin{align*} 
\rho_{\bfv,\bfw}(g)\phi &= (g_{t(e)}\phi_e g_{s(e)}^{-1}, g_i\phi_i)_{e\in E,i\in I}\\
\varrho_{\bfv,\bfw}(X)\phi &= (X_{t(e)}\phi_e - \phi_e X_{s(e)},X_i \phi_i)_{e\in E,i\in I}.
\end{align*}
Now we're exactly in the situation of section \ref{msq} i.e. $G_\bfv$ acts on the vector space $\BV_{\bfv,\bfw}\times	\BV_{\bfv,\bfw}^*$ with moment map  
\[ \mu_{\bfv,\bfw}: \BV_{\bfv,\bfw} \oplus \BV_{\bfv,\bfw}^* \rightarrow \mathfrak{g}_\bfv^*,\] 
given by (\ref{mmap}).\\
Next we fix $\alpha \in k$ and define the affine variety $\FV_\alpha(\bfv,\bfw)= \mu_{\bfv,\bfw}^{-1}(\alpha\mathbf{1}_\bfv)$, where $\mathbf{1}_\bfv\in \mathfrak{g}_\bfv^*$ is defined by $\mathbf{1}_\bfv(X)= \sum_{i\in I} \tr X_i$ for $X\in \mathfrak{g}_\bfv$. Following \cite{nak98} we set $\chi$ to be the character of $G_\bfv$ given by $\chi(g) = \prod_{i\in I} \det(g_i)^{-1}$. Furthermore we put
\[ k[\FV_\alpha(\bfv,\bfw)]^{G_\bfv,\chi^m}=\{ f\in k[\FV_\alpha(\bfv,\bfw)]\ |\ f(g(x))=\chi(g)^mf(x)\ \forall \	 x\in \FV_\alpha(\bfv,\bfw)\}.\]
Then $\bigoplus_{m\geq 0} k[\FV_\alpha(\bfv,\bfw)]^{G_\bfv,\chi^m}$ is a graded algebra and we define the \textit{Nakajima quiver variety} as 
\begin{eqnarray}\label{GiT} \FM_{\alpha,\chi}(\bfv,\bfw) = \proj\left(\bigoplus_{m\geq 0} k[\FV_\alpha(\bfv,\bfw)]^{G_\bfv,\chi^m}\right).\end{eqnarray}
In this article we will mostly be concerned with the affine version given by 
\[\FM_\alpha(\bfv,\bfw)= \spec(k[\FV_\alpha(\bfv,\bfw)]^{G_\bfv}).\]

A practical reason for this is the following fact.
\begin{lemma}\label{facts} 
The quotient $\FV_\alpha(\bfv,\bfw) \rightarrow \FM_\alpha(\bfv,\bfw)$ is a Zariski locally trivial $G_\bfv$-principal bundle.
\end{lemma}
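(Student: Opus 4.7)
The plan is to split the assertion into two parts: (i) the map $\FV_\alpha(\bfv,\bfw) \to \FM_\alpha(\bfv,\bfw)$ is a principal $G_\bfv$-bundle in the \'etale topology, and (ii) every such bundle is Zariski-locally trivial because $G_\bfv$ is a \emph{special} group in the sense of Serre. The first step is geometric and relies on the structure of Nakajima's construction, while the second is purely group-theoretic.

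For (i) I would first check that the $G_\bfv$-action on $\FV_\alpha(\bfv,\bfw)$ is free. A stabilizer $g = (g_i)$ of a point $(\phi_e, \phi_i)$ has to satisfy $g_i \phi_i = \phi_i$ and $g_{t(e)} \phi_e = \phi_e g_{s(e)}$, i.e.\ $g_i$ must restrict to the identity on the image of the framing map and commute with every arrow. The presence of the $W$-framing together with the stability conditions built into Nakajima's construction (\cite{nak94}, \cite{nak98}, and as exploited in \cite{Hau2}) then forces $g = \Id$. Once freeness is established, standard geometric invariant theory (in particular Luna's \'etale slice theorem, applied to the closed free action of the reductive group $G_\bfv$ on the affine variety $\FV_\alpha(\bfv,\bfw)$) identifies the affine categorical quotient $\spec(k[\FV_\alpha(\bfv,\bfw)]^{G_\bfv})$ with a geometric quotient, and realises the quotient morphism as an \'etale-locally trivial principal $G_\bfv$-bundle.

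For (ii), the group $G_\bfv = \prod_{i \in I} \GL(V_i)$ is a finite product of general linear groups. By a classical theorem of Serre, $\GL_n$ is special, and finite products of special groups are again special; hence every principal $G_\bfv$-bundle over a variety is Zariski-locally trivial, which gives the conclusion of the lemma. The main obstacle is to make the freeness step (i) rigorous, since it hinges on stability data that is implicit in the framed quiver setup and is really inherited from Nakajima's original construction; the specialness part (ii) is then immediate.
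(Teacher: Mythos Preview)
Your overall structure matches the paper's: reduce to showing that $\FV_\alpha(\bfv,\bfw)\to\FM_\alpha(\bfv,\bfw)$ is a principal $G_\bfv$-bundle, then invoke that $G_\bfv=\prod_i\GL(V_i)$ is special in Serre's sense so that every such bundle is Zariski-locally trivial. Part (ii) is exactly what the paper does (citing \cite{serre58}), and whether one passes from freeness to a principal bundle via Luna's slice theorem or via \cite{MFK94}, Proposition~0.9 and Amplification~1.3, as the paper does, is a matter of taste.

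The genuine gap is in your step (i). You attribute freeness of the $G_\bfv$-action to ``the stability conditions built into Nakajima's construction'', but $\FM_\alpha(\bfv,\bfw)=\spec\bigl(k[\FV_\alpha(\bfv,\bfw)]^{G_\bfv}\bigr)$ is the \emph{affine} quotient: no character $\chi$ and no stability condition enter here. Indeed for $\alpha=0$ the origin lies in $\FV_0(\bfv,\bfw)=\mu_{\bfv,\bfw}^{-1}(0)$ and is fixed by all of $G_\bfv$, so the action is not free and the lemma as literally stated fails; one must take $\alpha\neq 0$ (the paper only uses $\alpha=1$). For $\alpha\neq 0$ the freeness does hold, but the mechanism is the moment map equation itself, not stability. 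If $g=(g_i)$ stabilises $(\phi,\psi)\in\mu_{\bfv,\bfw}^{-1}(\alpha\mathbf{1}_\bfv)$, decompose each $V_i$ into generalised eigenspaces of $g_i$; since $g$ commutes with all arrows and with the framing maps, the representation splits accordingly, and all framing maps land in the $1$-eigenspace. On the complement $V^{(\neq 1)}$ one has an \emph{unframed} representation of the doubled quiver, so summing traces over $I$ gives
\[
\alpha\sum_{i\in I}\dim V_i^{(\neq 1)}=\sum_{i\in I}\tr\bigl(\mu_{\bfv,\bfw}(\phi,\psi)_i\big|_{V_i^{(\neq 1)}}\bigr)=0,
\]
since the unframed moment map has total trace zero. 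As $\alpha\neq 0$ this forces $V^{(\neq 1)}=0$, i.e.\ $g=\Id$. This (or an equivalent simplicity argument for representations of the preprojective relations at a generic parameter) is the content the paper imports from \cite[Lemma~6.5]{Re03}; once you replace the appeal to stability by this argument, your proof goes through.
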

\begin{proof} Notice that any $G_\bfv$-principal bundle is Zariski locally trivial, see \cite[Lemma 5 and 6]{serre58}. Hence it is enough to prove, that $\FV_\alpha(\bfv,\bfw) \rightarrow \FM_\alpha(\bfv,\bfw)$ is a $G_\bfv$-principal bundle. In view of Proposition 0.9 and Amplification 1.3 of \cite{MFK94} it is enough to show, that $G_\bfv$ acts scheme-theoretically freely on $\FV_\alpha(\bfv,\bfw)$ i.e. the natural map
\[ G_\bfv \times \FV_\alpha(\bfv,\bfw) \rightarrow \FV_\alpha(\bfv,\bfw) \times \FV_\alpha(\bfv,\bfw)\]
is a closed immersion, which can be done similarly to \cite[Lemma 6.5]{Re03}.
% From Lemma 5 in \cite{Hau2} we deduce, that $\alpha$ is injective. Next define a map
%\[ \beta:  \FV_\alpha(\bfv,\bfw) \times \FV_\alpha(\bfv,\bfw) \rightarrow \Hom(\mathfrak{g}_\bfv,\BV_{\bfv,\bfw}\times \BV_{\bfv,\bfw}^*),\]
%such that $(\phi,\psi) \in \FV_\alpha(\bfv,\bfw) \times \FV_\alpha(\bfv,\bfw)$ is in the image of $\alpha$ iff $\beta(\phi,\psi)$ has non-trivial kernel. Explicitly write $\phi=(\phi_e,\phi_i,\overline{\phi}_e,\overline{\phi}_i)_{e\in E,i\in I}$ and $\psi=(\psi_e,\psi_i,\overline{\psi}_e,\overline{\psi}_i)_{e\in E,i\in I} \in \FV_\alpha(\bfv,\bfw)$ and define for $x=(x_i)_{i\in I} \in \mathfrak{g}$
%\[\beta(\phi,\psi)(x) = (\phi_e x_{s(e)}-x_{t(e)} \psi_e,\phi_i-x_i\psi_i,\overline{\phi}_ex_{t(e)}- x_{s(e)}\overline{\psi}_e,\overline{\phi}_ix_i-\overline{\psi_i}) \in \BV_{\bfv,\bfw}\times \BV_{\bfv,\bfw}^* .\]
%Now similar to Proposition 4 in \cite{Hau2} one can show that every $0 \neq x \in \ker(\beta(\phi,\psi)) $ is invertible, which implies $ \rho_{\bfv,\bfw}(x)\psi=\phi$. This shows that the image of $\alpha$ is closed and furthermore that we can locally construct an inverse to $\alpha$ by choosing an element in the kernel of $\ker(\beta(\phi,\psi))$.
\end{proof}
Finally we notice, that motivically little is lost by restricting ourselves to $\FM_\alpha(\bfv,\bfw)$. Indeed, for $\alpha \in k^\times$ the affinization map 
\begin{eqnarray}\label{affi} \FM_{\alpha,\chi}(\bfv,\bfw) \rightarrow \FM_{\alpha}(\bfv,\bfw)\end{eqnarray}

is an isomorphism (cf. \cite[Lemma 7]{Hau2}) and for $\alpha =0$ we have
\begin{prop}\label{ats} The classes of $\FM_{0,\chi}(\bfv,\bfw)$ and $\FM_{1}(\bfv,\bfw)$ agree in $\mathscr{M}$.
\end{prop}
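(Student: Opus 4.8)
The statement to prove is Proposition~\ref{ats}: that $[\FM_{0,\chi}(\bfv,\bfw)] = [\FM_1(\bfv,\bfw)]$ in $\mathscr{M}$. The key structural fact is that both sides arise from the same quotient construction applied to $\FV_0(\bfv,\bfw)$ and $\FV_1(\bfv,\bfw)$ respectively, and the two fibers $\FV_0$ and $\FV_1$ are related by the scaling action on $\BV_{\bfv,\bfw}\oplus\BV_{\bfv,\bfw}^*$. Indeed, consider the $\mathbb{G}_m$-action that scales the $V^*$-factor (or equivalently acts on the pair $(v,w)\mapsto (v,tw)$); under it the moment map $\mu_{\bfv,\bfw}$ of~(\ref{mmap}) is homogeneous of weight $1$, so scaling identifies $\mu^{-1}(\mathbf{1}_\bfv)$ with $\mu^{-1}(t\cdot\mathbf{1}_\bfv)$ for every $t\in k^\times$. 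This already shows that all the $\FM_\alpha(\bfv,\bfw)$ for $\alpha\in k^\times$ are isomorphic, and likewise for $\FM_{\alpha,\chi}$. So the genuine content is the comparison between $\alpha=0$ and $\alpha=1$.

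The idea I would pursue is a deformation/degeneration argument over the affine line $\mathbb{A}^1=\spec k[\alpha]$. Form the relative moment-map fiber $\widetilde{\FV} \subset \BV_{\bfv,\bfw}\oplus\BV_{\bfv,\bfw}^*\times \mathbb{A}^1$ cut out by $\mu_{\bfv,\bfw}(v,w)=\alpha\mathbf{1}_\bfv$, together with the relative GIT quotients $\widetilde{\FM} = \spec$ and $\widetilde{\FM}_\chi = \proj$ of the $G_\bfv$-invariants of $k[\widetilde{\FV}]$, so that $\FM_\alpha(\bfv,\bfw)$ and $\FM_{\alpha,\chi}(\bfv,\bfw)$ are the fibers over $\alpha$. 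Over the open locus $\alpha\neq 0$ the scaling trivialisation above gives an isomorphism $\widetilde{\FM}_\chi|_{\mathbb{G}_m} \cong \FM_{1,\chi}(\bfv,\bfw)\times\mathbb{G}_m$, and by the affinization isomorphism~(\ref{affi}) this coincides with $\FM_1(\bfv,\bfw)\times\mathbb{G}_m$; hence $[\widetilde{\FM}_\chi|_{\mathbb{G}_m}] = (\BL-1)[\FM_1(\bfv,\bfw)]$ in $\mathscr{M}$. On the other hand the fiber of $\widetilde{\FM}_\chi$ over $0$ is $\FM_{0,\chi}(\bfv,\bfw)$, so by the scissor relation $[\widetilde{\FM}_\chi] = [\FM_{0,\chi}(\bfv,\bfw)] + (\BL-1)[\FM_1(\bfv,\bfw)]$. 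The analogous computation for the affine quotient $\widetilde{\FM}$ gives $[\widetilde{\FM}] = [\FM_0(\bfv,\bfw)] + (\BL-1)[\FM_1(\bfv,\bfw)]$. Thus it would suffice to show $[\widetilde{\FM}_\chi] = [\widetilde{\FM}]$, i.e. that the relative affinization map $\widetilde{\FM}_\chi \to \widetilde{\FM}$ is an isomorphism, or at least a piecewise-isomorphism after the base $\mathbb{A}^1$ is stratified compatibly.

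That last point is where the real difficulty lies, and I would handle it more carefully rather than via a naive global deformation. The affinization map $\FM_{0,\chi}(\bfv,\bfw)\to\FM_0(\bfv,\bfw)$ is \emph{not} an isomorphism in general (the issue that~(\ref{affi}) avoids by taking $\alpha\in k^\times$), so one cannot simply compare $\FM_{0,\chi}$ with $\FM_0$. Instead the cleanest route is to apply the $\mathbb{G}_m$-scaling argument directly to the GIT quotient~(\ref{GiT}): the scaling action on $\FV_\alpha(\bfv,\bfw)$ commutes with the $G_\bfv$-action and intertwines the linearisations, so it descends to an action on the total space $\widetilde{\FM}_\chi$ over $\mathbb{A}^1$ covering the standard scaling on $\mathbb{A}^1$. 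A $\mathbb{G}_m$-equivariant family over $\mathbb{A}^1$ with its standard scaling is Zariski-locally (in fact globally, by spreading out from the central fiber) isomorphic to the product of its central fiber with $\mathbb{A}^1$; concretely, the inclusion of the $0$-fiber $\FM_{0,\chi}(\bfv,\bfw)\hookrightarrow\widetilde{\FM}_\chi$ admits a $\mathbb{G}_m$-equivariant retraction, giving $\widetilde{\FM}_\chi \cong \FM_{0,\chi}(\bfv,\bfw)\times\mathbb{A}^1$. Restricting this isomorphism to $\alpha=1$ yields $\FM_{0,\chi}(\bfv,\bfw)\cong\FM_{1,\chi}(\bfv,\bfw)$, and combining with the affinization isomorphism~(\ref{affi}) at $\alpha=1$ gives $[\FM_{0,\chi}(\bfv,\bfw)] = [\FM_{1,\chi}(\bfv,\bfw)] = [\FM_1(\bfv,\bfw)]$ even in $\kvar$, hence in $\mathscr{M}$. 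The main obstacle to watch is verifying that the $\mathbb{G}_m$-action on the relative $\proj$ genuinely linearises the graded algebra $\bigoplus_m k[\widetilde{\FV}]^{G_\bfv,\chi^m}$ over $k[\alpha]$ compatibly (so that the family is a \emph{locally trivial} $\mathbb{G}_m$-variety over $\mathbb{A}^1$ and not merely equivariant), and that the retraction onto the central fiber exists as a morphism of schemes — for this one checks that the coordinate/graded ring is generated in $\mathbb{G}_m$-weights $\geq 0$ with the weight-$0$ part exactly $k[\alpha]$-times the central fiber's ring, which follows from the weight-$1$ homogeneity of $\mu_{\bfv,\bfw}$ noted above.
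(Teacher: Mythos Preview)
Your overall strategy---form the one-parameter family $\widetilde{\FM}_\chi\to\BA^1$, trivialise it over $k^\times$ via the scaling action, and compare the class of the total space with that of the central fibre---is exactly what the paper does. The genuine gap is in the last step. You assert that a $k^\times$-equivariant family over $\BA^1$ covering the standard scaling is globally a product $\FM_{0,\chi}(\bfv,\bfw)\times\BA^1$, and hence that $\FM_{0,\chi}(\bfv,\bfw)\cong\FM_{1,\chi}(\bfv,\bfw)$ as varieties. This is false in general. For the Jordan quiver with $\bfv=(n)$, $\bfw=(1)$, the variety $\FM_{0,\chi}$ is the Hilbert scheme of $n$ points on $\BA^2$ while $\FM_1$ is the rank-$n$ Calogero--Moser space; the latter is affine, the former is not for $n\geq 2$, so no such isomorphism exists. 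The heuristic ``non-negative weights plus a retraction give a product'' breaks down: even if a retraction $r:\widetilde{\FM}_\chi\to\FM_{0,\chi}$ existed, the map $(r,\alpha)$ to $\FM_{0,\chi}\times\BA^1$ has no reason to be an isomorphism, just as a flat $k^\times$-equivariant degeneration of a smooth conic to a pair of lines is not a product.

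What the paper does instead at this point is apply the Bialynicki--Birula theorem. With the action $\lambda\cdot(\phi,\psi,z)=(\lambda\phi,\lambda\psi,\lambda^2 z)$ all weights on the ambient linear space are positive, so every point of the smooth total space $\FN=\widetilde{\FM}_\chi$ flows to a fixed point as $\lambda\to 0$; the fixed locus sits inside the fibre $z=0$ and coincides with $\FM_{0,\chi}(\bfv,\bfw)^{k^\times}$. Each Bialynicki--Birula cell of $\FN$ is therefore an $\BA^1$-bundle over the corresponding cell of $\FM_{0,\chi}(\bfv,\bfw)$ (the extra attracting direction being $\partial/\partial z$, of weight $+2$), yielding $[\FN]=\BL\,[\FM_{0,\chi}(\bfv,\bfw)]$. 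Combining this with your correct identity $[\FN]=[\FM_{0,\chi}(\bfv,\bfw)]+(\BL-1)[\FM_1(\bfv,\bfw)]$ and cancelling $\BL-1$ gives the proposition---in $\mathscr{M}$, not in $\kvar$ as you claimed at the end.
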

\begin{proof} The argument is similar to \cite[Theorem 8]{Hau2}. Let $\mu: \BV_{\bfv,\bfw} \oplus \BV_{\bfv,\bfw}^* \oplus k \rightarrow \mathfrak{g}_\bfv^*$ be the map given by $\mu(\phi,\psi,z)= \mu_{\bfv,\bfw}(\phi,\psi) - z\mathbf{1}_\bfv$. Letting $G_\bfv$ act trivially on $k$, the fiber $\FV = \mu^{-1}(0)$ is $G_\bfv$-invariant and we define the GIT quotient as in (\ref{GiT}) by
\[ \FN = \proj\left(\bigoplus_{m\geq 0} k[\FV]^{G_\bfv,\chi^m}\right).\]
We have a natural map $f:\FN\rightarrow k$ induced by the projection $\FV\rightarrow k$. Analogous to \cite[Corollary 3.12]{nak98} we deduce that $\FN$ is non-singular and furthermore $f^{-1}(\alpha) \cong \FM_{\alpha,\chi}(\bfv,\bfw)$ for every $\alpha \in k$ since $\proj$ is compatible with base change (cf. \cite[Remark 13.27]{go10}).\\
Now the $k^\times$-action on $\FV$ given by
\[ \lambda\cdot (\phi,\psi,z) = (\lambda\phi,\lambda\psi,\lambda^2z),\]
descends to an action on $\FN$ and we have an equality of fixpoint sets $\FN^{k^\times} = f^{-1}(0)^{k^\times} = \FM_{0,\chi}(\bfv,\bfw)^{k^\times}$. Hence the Bialynicki-Birula Theorem \cite[Theorem 4.1]{bi73} (notice that the existence of a $k^\times$-invariant quasi-affine open covering is automatic by \cite[Corollary 2]{su74}) implies $[\FN] = \BL [\FM_{0,\chi}(\bfv,\bfw)]$.\\
On the other hand using that $\mu_{\bfv,\bfw}$ is bilinear we obtain a trivialization $f^{-1}(k^\times) \cong \FM_{1,\chi}(\bfv,\bfw) \times k^\times$ and hence
\[[\FN]=[\FM_{0,\chi}(\bfv,\bfw)] + [f^{-1}(k^\times)]=[\FM_{0,\chi}(\bfv,\bfw)] + (\BL-1)\FM_{1}(\bfv,\bfw),\]
where we also used the isomorphism (\ref{affi}). Comparing the two expressions for $[\FN]$ implies the result.
\end{proof}

\section{The main computation}\label{tmc}

In this section we prove our main Theorem \ref{mainth}, a combinatorial formula for the motive of a Nakajima quiver variety $\FM_{\alpha,\chi}(\bfv,\bfw)$	. By (the proof of) Proposition \ref{ats} it is enough to consider the following generating series
\begin{eqnarray}\label{defin}\Phi(\bfw) = \sum_{\bfv \in \BN^I} [\FM_1(\bfv,\bfw)] \BL^{d_{\bfv,\bfw}}T^\bfv \in \mathscr{M}[[T_1,\dots,T_n]], \end{eqnarray}
where we put $d_{\bfv,\bfw} = \dim(\mathfrak{g}_\bfv)-\dim(\BV_{\bfv,\bfw})$.
Having proposition \ref{prop1} available, we can argue along the lines of the finite field computations in \cite{Hau2}, with one difference. Namely, given a fibration $f:X \rightarrow Y$ with fiber $F$ we cannot deduce in general 
\begin{eqnarray}\label{trif} [X]=[F][Y] \end{eqnarray}
in $\kvar$ or $\mathscr{M}$, whereas a similar relation clearly holds over a finite field. However (\ref{trif}) holds if the fibration is Zariski-locally trivial i.e. $Y$ admits an open covering $Y=\cup_{j} U_j$ such that $f^{-1}(U_j) \cong F\times U_j$. Indeed, in this case we have 
\[[X] = \sum_{j} [f^{-1}(U_j)] - \sum_{j_1 < j_2} [f^{-1}(U_{j_1} \cap U_{j_2})] +... = [F][Y].\]
Combining this with Lemma \ref{facts} and Proposition \ref{prop1} we get
\begin{eqnarray}\label{phisim} \Phi(\bfw) = \sum_{\bfv \in \BN^I} \frac{[\FV_1(\bfv,\bfw)]}{[\GL_\bfv]} \BL^{d_{\bfv,\bfw}}T^\bfv =\sum_{\bfv \in \BN^I} \frac{[a_{\varrho_{\bfv,\bfw}},\lla-\pi, \mathbf{1}_\bfv\rra]}{[\GL_\bfv]}T^\bfv, 
\end{eqnarray}
with the notations 
\[a_{\varrho_{\bfv,\bfw}} = \{ (\phi,X) \in \BV_{\bfv,\bfw} \times \mathfrak{g}_\bfv \ | \ \varrho_{\bfv,\bfw}(X)v=0\}\]
 and $\pi:a_{\varrho_{\bfv,\bfw}}\rightarrow \mathfrak{g}_\bfv$ the natural projection.\\
Next we use some basic linear algebra to split up the above generating series into a regular and a nilpotent part. Given a finite dimensional vector space $V$ of dimension $n$ and an endomorphism $X$ of $V$, we can write $V=N(X) \oplus R(X)$, where $N(X) = \ker(X^n)$ and $R(X)=\im(X^n)$. With respect to this decomposition we have $X=X^\n \oplus X^\re$ with $X^\n= X_{|N(X)}$ nilpotent and $X^\re=X_{|R(X)}$ regular. \\
Now let $\bfv'= (v_i')_{i\in I}$ with $\bfv' \leq \bfv $ (i.e the inequality holds for every entry). We define the three varieties
\begin{align*}
a_{\varrho_{\bfv,\bfw}}^{\bfv'} &= \{ (\phi,X) \in a_{\varrho_{\bfv,\bfw}} \ | \ \dim(N(X_i))=v_i' \text{ for }i\in I\} \\
 a_{\varrho_{\bfv,\bfw}}^{\n} &= \{ (\phi,X) \in a_{\varrho_{\bfv,\bfw}} \ | \ X \text{ nilpotent}\}, \\
 a_{\varrho_{\bfv,\bfw}}^\re &= \{ (\phi,X) \in a_{\varrho_{\bfv,\bfw}}\ | \ X \text{ regular}\}.
\end{align*}

\begin{lemma}\label{first} For every $\bfv' \leq \bfv $ we have the following relation in $\expm$
\begin{eqnarray}\label{msp1}  \frac{[a_{\varrho_{\bfv,\bfw}}^{\bfv'},\lla-\pi, \mathbf{1}_\bfv\rra]}{[G_\bfv]} = \frac{[a_{\varrho_{\bfv',\bfw}}^{\n}]}{[G_{\bfv'}]} \frac{[a_{\varrho_{\bfv-\bfv',0}}^\re, \lla-\pi, \mathbf{1}_{\bfv-\bfv'}\rra]}{[G_{\bfv-\bfv'}]}.\end{eqnarray}
\end{lemma}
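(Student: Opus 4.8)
The plan is to reduce the identity to two ingredients: a linear-algebraic splitting of the ``block-diagonal'' locus inside $a_{\varrho_{\bfv,\bfw}}$, and a frame-bundle argument relating that locus to all of $a_{\varrho_{\bfv,\bfw}}^{\bfv'}$ via the $G_\bfv$-action. First I would fix, for each $i$, a decomposition $V_i = V_i'\oplus V_i''$ with $\dim V_i' = v_i'$ and $\dim V_i'' = v_i-v_i'$, and let $P=\prod_i(\GL(V_i')\times\GL(V_i''))\subseteq G_\bfv$ be the block-diagonal subgroup, so $P\cong G_{\bfv'}\times G_{\bfv-\bfv'}$. Inside $a_{\varrho_{\bfv,\bfw}}$ consider the locally closed subvariety $Z$ of those $(\phi,X)$ for which every $X_i$ preserves $V_i'$ and $V_i''$ with $X_i|_{V_i'}$ nilpotent and $X_i|_{V_i''}$ invertible.

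The key linear algebra is the following: if $\phi_e\colon V_{s(e)}\to V_{t(e)}$ intertwines $X_{s(e)}$ and $X_{t(e)}$ and all the $X_i$ are block-diagonal as above, then $\phi_e$ is itself block-diagonal, since any homomorphism intertwining a nilpotent operator with an invertible one (in either order) vanishes — one pushes powers of the nilpotent operator across at the price of powers of the inverse. Likewise $X_i\phi_i=0$ with $\phi_i\colon W_i\to V_i'\oplus V_i''$ forces the $V_i''$-component of $\phi_i$ to vanish, since $X_i|_{V_i''}$ is invertible. Restricting all data to the two blocks thus gives an isomorphism
\[ Z\;\xrightarrow{\ \sim\ }\;a_{\varrho_{\bfv',\bfw}}^{\n}\times a_{\varrho_{\bfv-\bfv',0}}^{\re},\]
the $W$-part surviving only on the nilpotent block, which is exactly why $\bfw$ is replaced by $0$ on the regular block. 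Since each $X_i'$ is nilpotent, $\lla-\pi,\mathbf{1}_\bfv\rra$ restricted to $Z$ equals $-\sum_i\tr X_i''$, i.e. it becomes $0$ on the first factor and $\lla-\pi,\mathbf{1}_{\bfv-\bfv'}\rra$ on the second, so in $\expm$
\[ [Z,\lla-\pi,\mathbf{1}_\bfv\rra]=[a_{\varrho_{\bfv',\bfw}}^{\n}]\cdot[a_{\varrho_{\bfv-\bfv',0}}^{\re},\lla-\pi,\mathbf{1}_{\bfv-\bfv'}\rra].\]

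Next I would introduce the frame bundle $\tilde Y=\{(\phi,X,g):(\phi,X)\in a_{\varrho_{\bfv,\bfw}}^{\bfv'},\ g\in G_\bfv,\ g_iV_i'=N(X_i),\ g_iV_i''=R(X_i)\}$, using that over $a_{\varrho_{\bfv,\bfw}}^{\bfv'}$ the spaces $N(X_i)$ and $R(X_i)$ form subbundles (their ranks being constant there). Forgetting $g$ exhibits $\tilde Y\to a_{\varrho_{\bfv,\bfw}}^{\bfv'}$ as a torsor under the right $P$-action on $g$, which is Zariski-locally trivial because products of $\GL_n$'s are special (\cite[Lemma 5 and 6]{serre58}, as used in Lemma \ref{facts}); while $(\phi,X,g)\mapsto(g,g^{-1}\cdot(\phi,X))$ identifies $\tilde Y\cong G_\bfv\times Z$. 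The function $\lla-\pi,\mathbf{1}_\bfv\rra=-\sum_i\tr X_i$ is $G_\bfv$-invariant, so on $\tilde Y$ it is simultaneously the pullback of $\lla-\pi,\mathbf{1}_\bfv\rra$ from $a_{\varrho_{\bfv,\bfw}}^{\bfv'}$ and the pullback of $\lla-\pi,\mathbf{1}_\bfv\rra|_Z$ from $Z$. Trivializing the $P$-bundle over an open cover and arguing by inclusion--exclusion as in the discussion around (\ref{trif}) — legitimate precisely because the twisting morphism to $\BA^1$ is pulled back from the base — gives $[\tilde Y,\lla-\pi,\mathbf{1}_\bfv\rra]=[P]\cdot[a_{\varrho_{\bfv,\bfw}}^{\bfv'},\lla-\pi,\mathbf{1}_\bfv\rra]$, whereas $\tilde Y\cong G_\bfv\times Z$ gives $[\tilde Y,\lla-\pi,\mathbf{1}_\bfv\rra]=[G_\bfv]\cdot[Z,\lla-\pi,\mathbf{1}_\bfv\rra]$. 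Dividing by $[G_\bfv][P]=[G_\bfv][G_{\bfv'}][G_{\bfv-\bfv'}]$, all invertible in $\expm$, and substituting the formula for $[Z,\lla-\pi,\mathbf{1}_\bfv\rra]$ yields (\ref{msp1}).

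The hard part will be the bookkeeping in the last step: one must make sure the exponential twist survives the bundle arguments unchanged, and this works only because $\lla-\pi,\mathbf{1}_\bfv\rra$ is a sum of traces, hence $G_\bfv$-invariant; one must also verify that $\tilde Y\to a_{\varrho_{\bfv,\bfw}}^{\bfv'}$ really is a Zariski-locally trivial $P$-torsor, where specialness of $P$ is the essential input. The linear-algebra step, by contrast, is routine once one observes the vanishing of intertwiners between nilpotent and invertible operators.
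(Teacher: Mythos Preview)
Your proof is correct and is essentially the same as the paper's: the paper packages your $\tilde Y\cong G_\bfv\times Z\to a_{\varrho_{\bfv,\bfw}}^{\bfv'}$ into a single morphism $\Delta\colon a_{\varrho_{\bfv',\bfw}}^{\n}\times a_{\varrho_{\bfv-\bfv',0}}^\re\times G_\bfv\to a_{\varrho_{\bfv,\bfw}}^{\bfv'}$, $(\phi',X',\phi'',X'',g)\mapsto(\rho_{\bfv,\bfw}(g)(\phi'\oplus\phi''),\Ad_g(X'\oplus X''))$, and shows it is a Zariski-locally trivial $G_{\bfv'}\times G_{\bfv-\bfv'}$-fibration, using the same vanishing-of-intertwiners observation and the same trace-invariance of the exponential. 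The only cosmetic difference is that for local triviality the paper writes down explicit local sections (local bases for $N(X_i)$ and $R(X_i)$) rather than invoking specialness of $P$.
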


\begin{proof}  Fix for all $i\in I$ a decomposition $V_i=V_i' \oplus V_i''$ with $\dim(V_i')= v'_i$. This induces inclusions
\[\BV_{\bfv',\bfw}\oplus \BV_{\bfv-\bfv',0} \hookrightarrow \BV_{\bfv,\bfw} \text{  and  } \mathfrak{g}_{\bfv'} \oplus \mathfrak{g}_{\bfv-\bfv'} \hookrightarrow \mathfrak{g}_{\bfv}.\]
We will prove that the morphism
\begin{align*} \Delta: a_{\varrho_{\bfv',\bfw}}^{\n} \times a_{\varrho_{\bfv-\bfv',0}}^\re \times G_\bfv &\rightarrow a_{\varrho_{\bfv,\bfw}}^{\bfv'}\\ 
							(\phi',X',\phi'',X'',g) &\mapsto (\rho_{\bfv,\bfw}(g)(\phi'\oplus \phi''), \Ad_g(X' \oplus X''))
\end{align*}

is a Zariski-locally trivial $G_{\bfv'} \times G_{\bfv-\bfv'}$-fibration. Since for every $(\phi',X') \in a_{\varrho_{\bfv',\bfw}}^{\n}$ we have
\[ \lla-\pi, \mathbf{1}_{\bfv'}\rra(\phi',X')= \sum_{i\in I} \tr X'_i=0,\]
this will imply the lemma using (\ref{trif}).\\
First notice that $\Delta$ is well defined because
\[\varrho_{\bfv,\bfw} \circ \Ad_g = \Ad_{\rho_{\bfv,\bfw}(g)} \circ \varrho_{\bfv,\bfw}.\]
The $G_{\bfv'} \times G_{\bfv-\bfv'}$-action on the domain of $\Delta$ is given as follows. 
%Let $(\phi,X) \in a_{\varrho_{\bfv,\bfw}}^{\bfv'}$. Then for each $i\in I$ we can write $X_i=X_i^\n \oplus X_i^\re$ with the notation from above and up to conjugation in $G_\bfv$ we can assume that this decomposition is compatible with the one we fixed for $V_i$, i.e. $N(V_i)=V_i'$ and $R(V_i)=V_i''$. Now $\varrho_{\bfv,\bfw}(X)\phi=0$ gives for all $i\in I$ and $e\in E$ the equations
%\[ X_{t(e)}\phi_e=\phi_eX_{s(e)}, \ X_i\phi_i=0.\]
%From this we deduce $(({\phi_e}_{|V_{s(e)}'},\phi_i)_{e\in E,i\in I},(X_i^\n)_{i\in I})\in a_{\varrho_{\bfv',\bfw}}^{\n}$ and  $(({\phi_e}_{|V_{s(e)}''})_{e\in E},(X_i^\re)_{i\in I}) \in a_{\varrho_{\bfv-\bfv',0}}^\re$, which proves surjectivity of $\Gamma$.\\
For \linebreak $h=(h',h'')\in G_{\bfv'}\times G_{\bfv-\bfv'}$ and $(\phi',X',\phi'',X'',g)\in a_{\varrho_{\bfv',\bfw}}^{\n} \times a_{\varrho_{\bfv-\bfv',0}}^\re \times G_\bfv$  we set
\[ h\cdot (\phi',X',\phi'',X'',g) = (\rho_{\bfv',\bfw}(h')\phi',\Ad_{h'}X',\rho_{\bfv-\bfv',0}(h'')\phi'',\Ad_{h''}X'',gh^{-1}),\]
where $gh^{-1}$ is understood via the inclusion  $G_{\bfv'} \times G_{\bfv-\bfv'} \hookrightarrow G_{\bfv}$. One checks directly that $\Delta$ is invariant under this action and hence each fiber of $\Delta$ carries a free $G_{\bfv'}\times G_{\bfv-\bfv'}$-action. \\
On the other hand, assume $\Delta(\phi_1',X_1',\phi_1'',X_1'',g_1) = \Delta(\phi_2',X_2',\phi_2'',X_2'',g_2)$. This implies
\[ \Ad_{g_2^{-1}g_1}(X_1' \oplus X_1'')=X_2'\oplus X_2''.\]
Since $X_j'$ is nilpotent and $X_j''$ regular for $j=1,2$, the decomposition $V_i=V_i' \oplus V_i''$ is preserved by $g_2^{-1}g_1$ i.e. $g_2^{-1}g_1 \in G_{\bfv'}\times G_{\bfv-\bfv'}$, which shows that each fiber of $\Delta$ is isomorphic to $G_{\bfv'}\times G_{\bfv-\bfv'}$.\\ 
Finally to trivialize $\Delta$ locally we notice, that there is an open covering $a_{\varrho_{\bfv,\bfw}}^{\bfv'}= \cup_j U_j$ and algebraic morphisms $t_j:U_j \rightarrow G_\bfv$ such that for $X\in U_j$ and $i\in I$ the columns of the matrix $t_j(X)_i$ form a basis of $N(X_i)$ and $R(X_i)$.
\end{proof}
%\begin{lemma}\label{linalg} (a) Assume $V=N \oplus R$ and two endomorphisms $X,Y$ of $V$ with $N(X)=N(Y)=N$ and $R(X)=R(Y)=R$. If $g\in \GL(V)$ satisfies $gXg^{-1}=Y$ we have
%\[g(N) \subset N, \ g(R) \subset R.\]
%\end{lemma}
Now we use the stratification $a_{\varrho_{\bfv,\bfw}}= \coprod_{\bfv' \leq \bfv} a_{\varrho_{\bfv,\bfw}}^{\bfv'}$ together with lemma \ref{first} to get

\begin{align}\label{aster} \nonumber \Phi(\bfw) &\stackrel{(\ref{phisim})}{=} \sum_{\bfv \in \BN^I} \frac{[a_{\varrho_{\bfv,\bfw}},\lla-\pi, \mathbf{1}_\bfv\rra]}{[\GL_\bfv]}T^\bfv\\
\nonumber &= \sum_{\bfv \in \BN^I}\sum_{\bfv' \leq \bfv} \frac{[a_{\varrho_{\bfv,\bfw}}^{\bfv'},\lla-\pi, \mathbf{1}_\bfv\rra]}{[\GL_\bfv]}T^\bfv\\ 
\nonumber &\stackrel{(\ref{msp1})}{=}  \sum_{\bfv \in \BN^I}\sum_{\bfv' \leq \bfv}\frac{[a_{\varrho_{\bfv',\bfw}}^{\n}]}{[G_{\bfv'}]} \frac{[a_{\varrho_{\bfv-\bfv',0}}^\re, \lla-\pi,\mathbf{1}_{\bfv-\bfv'}\rra]}{[G_{\bfv-\bfv'}]}T^\bfv \\
&= \Phi_{\n}(\bfw)\Phi_\re,
\end{align}
where we used the notations 
\[ \Phi_{\n}(\bfw) = \sum_{\bfv \in \BN^I} \frac{[a_{\varrho_{\bfv,\bfw}}^{\n}]}{[G_{\bfv}]}T^\bfv\]
and
\[ \Phi_\re = \sum_{\bfv \in \BN^I} \frac{[a_{\varrho_{\bfv,0}}^\re, \lla-\pi, \mathbf{1}_{\bfv}\rra]}{[G_{\bfv}]}T^\bfv.\]

Now \cite[Lemma 3]{Hau2} implies $\Phi(\mathbf{0}) = 1$ and therefore
\begin{eqnarray}\label{fract} \Phi(\bfw) = \frac{\Phi_{\n}(\bfw)}{\Phi_{\n}(0)},\end{eqnarray}	

which leaves us with computing $\Phi_{\n}(\bfw)$.\\
We denote by $\FP$ the set of all partitions $\lambda = (\lambda^1,\lambda^2,\dots)$, where $\lambda^1\geq \lambda^2\geq\dots$. The size of $\lambda$ is $|\lambda|=\lambda^1+\lambda^2+\dots$ and $\FP_n$ denotes the set of partitions of size $n$. For $\lambda \in \FP_n$ we write $\FC(\lambda)$ for the nilpotent conjugacy class, whose Jordan normal form is given by $\lambda$. For $\bfl = (\lambda_i)\in \FP^I$ with $\lambda_i\in \FP_{v_i}$ we set 

\[a_{\varrho_{\bfv,\bfw}}^{\n}(\bfl) = \{ (\phi,X) \in a_{\varrho_{\bfv,\bfw}}^{\n} \ |\ X_i \in \FC(\lambda_i) \},\]
which gives the stratification

\begin{eqnarray}\label{strat2}a_{\varrho_{\bfv,\bfw}}^{\n} = \coprod_{\substack{\bfl \in \FP^I \\ \lambda_i\in \FP_{v_i}}} a_{\varrho_{\bfv,\bfw}}^{\n}(\bfl).\end{eqnarray}

To compute $[a_{\varrho_{\bfv,\bfw}}^{\n}(\bfl)]$ we look at the projection 

\begin{eqnarray}\label{vb} \pi: a_{\varrho_{\bfv,\bfw}}^{\n}(\bfl) \rightarrow\FC(\bfl)= \prod_{i\in I} \FC(\lambda_i).\end{eqnarray}
The fiber of $\pi$ over $X\in \FC(\bfl)$ is simply $\ker(\varrho_{\bfv,\bfw}(X))$. Because of $\varrho_{\bfv,\bfw} \circ \Ad_g = \Ad_{\rho_{\bfv,\bfw}(g)} \circ \varrho_{\bfv,\bfw}$ the dimensions of those kernels are constant and hence $\pi$ is a vector bundle of rank, say, $\kappa_{\bfv,\bfw}(\bfl)$. 
\begin{lemma}\label{second} Denote by $Z(\bfl) \subset G_\bfv$ the centralizer of (some element in) $\FC(\bfl)$. We have the following relation in $\mathscr{M}$.
\begin{eqnarray}\label{idkn} \frac{[a_{\varrho_{\bfv,\bfw}}^{\n}(\bfl)]}{[G_\bfv]} = \frac{\BL^{\kappa_{\bfv,\bfw}(\bfl)}}{[Z(\bfl)]}\end{eqnarray}
\end{lemma}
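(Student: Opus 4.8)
The plan is to express both the numerator and the denominator of (\ref{idkn}) in terms of the single variety $\FC(\bfl) = \prod_{i\in I}\FC(\lambda_i)$. Fix a representative $X_0 = (X_0^{(i)})_{i\in I} \in \FC(\bfl)$, so that $Z(\bfl) = \prod_{i\in I} Z(\lambda_i)$ with $Z(\lambda_i) \subset \GL(V_i)$ the centralizer of the nilpotent $X_0^{(i)}$, and $\FC(\lambda_i) \cong \GL(V_i)/Z(\lambda_i)$. I will show that $[a_{\varrho_{\bfv,\bfw}}^{\n}(\bfl)] = \BL^{\kappa_{\bfv,\bfw}(\bfl)}[\FC(\bfl)]$ and $[G_\bfv] = [Z(\bfl)][\FC(\bfl)]$ in $\mathscr{M}$; dividing the first identity by the second then gives (\ref{idkn}).

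For the first identity, recall that $\pi: a_{\varrho_{\bfv,\bfw}}^{\n}(\bfl) \to \FC(\bfl)$ has already been identified as a vector bundle of rank $\kappa_{\bfv,\bfw}(\bfl)$, the fibres being the kernels $\ker\varrho_{\bfv,\bfw}(X)$, of constant dimension as $X$ ranges over the orbit $\FC(\bfl)$. An algebraic vector bundle is Zariski-locally trivial, so over a trivializing open cover $\FC(\bfl) = \bigcup_j U_j$ the stratification argument of (\ref{trif}) gives $[a_{\varrho_{\bfv,\bfw}}^{\n}(\bfl)] = \BL^{\kappa_{\bfv,\bfw}(\bfl)}[\FC(\bfl)]$.

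For the second identity, consider the orbit map $q: G_\bfv \to \FC(\bfl)$, $g\mapsto \Ad_g(X_0)$. It is surjective with fibres the cosets of $Z(\bfl)$, hence a principal $Z(\bfl)$-bundle, and it is Zariski-locally trivial because $Z(\bfl)$ is a special group: by Wedderburn each $Z(\lambda_i)$ is a semidirect product of a unipotent group (unipotent groups are special) with $\prod_{k\ge1}\GL_{m_k(\lambda_i)}$ (a product of general linear groups is special), so $Z(\lambda_i)$ is special as an extension of special groups, and a product of special groups is special. Running (\ref{trif}) once more over a trivializing cover of $\FC(\bfl)$ yields $[G_\bfv] = [Z(\bfl)][\FC(\bfl)]$. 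The same structural description shows $[Z(\bfl)] = \prod_{i\in I}\BL^{n_i}\prod_k[\GL_{m_k(\lambda_i)}]$ for suitable $n_i$, which is a product of powers of $\BL$ and factors $\BL^{a}-1$, hence invertible in $\mathscr{M}$; so we may legitimately divide the two identities to conclude.

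The only step that is not a formal manipulation is the Zariski-local triviality of the orbit map $q$ (equivalently, the speciality of the centralizer $Z(\bfl)$), and this is where I expect the real content to lie. If one prefers to bypass the structure theory of special groups, one can instead construct explicit local sections of $q$: on a Zariski neighbourhood of a given $X\in\FC(\bfl)$ choose a Jordan basis of each $V_i$ adapted to $X_i$, which is possible precisely because the integers $\dim\ker X_i^m$ are constant on $\FC(\lambda_i)$, so the required vectors can be picked out by linear algebra over a Zariski open.
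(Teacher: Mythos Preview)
Your proof is correct and follows the same strategy as the paper: both use that $\pi$ is a vector bundle of rank $\kappa_{\bfv,\bfw}(\bfl)$ and then reduce to the identity $[G_\bfv] = [Z(\bfl)][\FC(\bfl)]$ via Zariski-local triviality of the orbit map $G_\bfv \to \FC(\bfl)$, i.e., speciality of $Z(\bfl)$. The only difference is in how speciality is justified: the paper invokes Propositions~3.13 and~3.16 of \cite{Mer13}, whereas you give a more elementary argument via the Levi decomposition of the nilpotent centralizer (unipotent radical times $\prod_k \GL_{m_k(\lambda_i)}$), which has the advantage of avoiding the external reference and making the invertibility of $[Z(\bfl)]$ transparent at the same time.
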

\begin{proof}
The formula (\ref{zent}) below shows in particular that $[Z(\bfl)]$ is invertible in $\mathscr{M}$. Since the projection (\ref{vb}) is a vector bundle, we're left with proving $[G_\bfv]/[Z(\bfl)]= [\FC(\bfl)]$. Since $\FC(\bfl)$ is isomorphic to $G_\bfv / Z(\bfl)$, see for example \cite[Chapter 3.9.1]{bo12}, it is enough to prove that the $Z(\bfl)$-principal bundle $G_\bfv \rightarrow  G_\bfv / Z(\bfl)$ is Zariski locally trivial by (\ref{trif}). In fact, this is true for every $Z(\bfl)$-principal bundle, which follows from combining Propositions 3.13 and 3.16 of \cite{Mer13}.
\end{proof}

%\begin{lemma}\label{second} The morphism
%\begin{align*}\Xi: \prod_{e\in E} \{\phi\in \Hom(V_{s(e)},V_{t(e)})\ |\ \phi N(\lambda^{s(e)}) = N(\lambda^{t(e)})\phi \} \times \prod_{i\in I} \{\phi\in \Hom(W_i,V_i)\ |\ N(\lambda^i)\phi=0\} \times G_\bfv &\rightarrow a_{\varrho_{\bfv,\bfw}}^{\n}(\bfl)\\
%(\phi_e,\phi_i,g) &\mapsto (\Ad_g\phi_e,g\phi,\Ad_gN(\lambda))
%\end{align*}
%is a Zariski locally trivial $\mathcal{C}_\lambda$-principal bundle.
%\end{lemma}
%\begin{proof} The prove that $\Xi$ is a $\mathcal{C}_\lambda$-principal bundle is straight forward and similar to \ref{first}. Zariski locall triviality follows since $\mathcal{C}_\lambda$ is special by lemma \ref{special}.
%\end{proof}
 
To compute $\kappa_{\bfv,\bfw}(\bfl)$ and $[Z(\bfl)]$, denote by $m_k(\lambda)$ the multiplicity of $k\in \BN$ in a partition $\lambda \in \FP$. Then given any two partitions $\lambda,\lambda' \in \FP$ we define their inner product to be
\[ \lla \lambda,\lambda' \rra = \sum_{i,j\in \BN} \min(i,j)m_i(\lambda)m_j(\lambda').\]
Lemma 3.3 in \cite{Hua} implies now
\begin{eqnarray}\label{kappa} \kappa_{\bfv,\bfw}(\bfl) = \sum_{e\in E} \lla  \lambda_{s(e)},\lambda_{t(e)}\rra + \sum_{i\in I} \lla 1^{w_i},\lambda_i \rra, 
\end{eqnarray}
where $1^{w_i} \in \FP_{w_i}$ denotes the partition $(1,1,\dots,1)$.\\
For $[Z(\bfl)]$ we can use the formula (1.6) from \cite[Chapter 2.1]{Mac}. There the formula is worked out over a finite field but Lemma 1.7 of \textit{loc. cit.} holds over any field. In our notation this gives (see \cite[Chapter 3]{Hua} for details)
\begin{eqnarray}\label{zent} [Z(\bfl)] =\prod_{i\in I} \BL^{\lla \lambda_i,\lambda_i \rra} \prod_{k\in \BN} \prod_{j=1}^{m_k(\lambda_i)} (1-\BL^{-j}).\end{eqnarray}

Finally combining (\ref{fract}), (\ref{strat2}), (\ref{idkn}), (\ref{kappa}) and (\ref{zent}) we obtain our main theorem \ref{mainth}.

\printbibliography

\end{document}